\numberwithin{equation}{section} \numberwithin{figure}{section}
\newtheorem{definition}{Definition}
\newtheorem{theorem}{Theorem}
\newtheorem{example}{Example}
\newtheorem{proposition}{Proposition}
\numberwithin{lemma}{section} \numberwithin{theorem}{section}
\numberwithin{corollary}{section}\numberwithin{definition}{section}
\numberwithin{conjecture}{section} \numberwithin{remark}{section}
\numberwithin{example}{section} \numberwithin{table}{section}
\numberwithin{proposition}{section}
\begin{document}
\title[ Cyclic nonlinear contractions in $G$-metric spaces]{Fixed points for $G$-cyclic $\left( \phi -\psi \right)$-Kannan and $G$-cyclic $\left( \phi -\psi \right)$-Chatterjea contractions in $G$-metric spaces}
\author{Mohammad Al-Khaleel$^{*,\dagger}$ \and Sharifa Al-Sharif$^{\dagger}$}
\thanks{$^{*}$ \textbf{Corresponding author},  Department of Applied Mathematics and Sciences, Khalifa University, Abu Dhabi, UAE }
\thanks{$^{\dagger}$ Department of Mathematics, Yarmouk University, Irbid, Jordan.}
 \subjclass[2000]{47H10,46T99,54H25}
 \keywords{Cyclic contraction, Kannan contraction, Chatterjea contraction, fixed
point theory, $G$-metric spaces}
 \maketitle

\begin{abstract}
Definitions of what are called $G$-cyclic $\left( \phi
-\psi \right)$-Kannan contraction and $G$-cyclic $\left( \phi
-\psi\right)$-Chatterjea contraction are introduced in this paper. We use these new concepts to establish new fixed point results
in the context of complete generalized metric spaces. These results are new generalizations and extensions of the Kannan and Chatterjea fixed point theorems and are  generalized versions of some fixed point results proved in the literature. The analysis and theory are illustrated by some examples.
\end{abstract}

\section{\textbf{Introduction and Preliminaries}}
over the past couple of years, many researchers tried to generalize the usual metric space, see for example \cite{Gah63,Gah66,Dha92} and references therein.
Many of these generalizations were refuted by other researches \cite{Ha88,Mus04,Nai05I,Nai05II} due to the fundamental flaws they contain.
In 2006, Mustafa and Sims \cite{Mus06} were able to introduce in an appropriate new structure a new generalization called $G$-metric space in which all flaws of previous generalizations were amended. They were also able to prove that every $G$-metric space is topologically equivalent to a usual metric space, which means that it is a straightforward task to transform concepts from usual metric spaces to $G$-metric spaces. Furthermore, one can obtain similar results to those in usual metric spaces straightforwardly but in a more general setting.
The definition of the generalized metric space, $G$-metric space, as introduced by Mustafa and Sims in \cite{Mus06} is given below.
\begin{definition}[\cite{Mus06}]\label{def:G-metric}
Let $X$ be a nonempty set and let $\mathbb{R}^+$ denote the set of
all positive real numbers. Suppose that a mapping $G:X\times
X\times X\rightarrow \mathbb{R}^+$ satisfies
\begin{enumerate}
    \item [(G1)] $G(x,y,z)=0$ if $x=y=z$,
    \item [(G2)] $0< G(x,x,y)$ whenever $x\neq y$, for all $x,y\in
    X$,
    \item [(G3)] $G(x,x,y)\le G(x,y,z)$ whenever $y\neq z$, for all $x,y,z\in
    X$,
    \item [(G4)] $G(x,y,z)=G(x,z,y)=G(y,x,z)=\dots$. (Symmetry in all of the three variables),
    \item [(G5)]  $G(x,y,z)\le G(x,a,a)+G(a,y,z)$, for all $x,y,z,a\in
    X$. (Rectangle inequality).
\end{enumerate}
Then $G$ is called a generalized metric, $G$-metric on $X$, and
$(X,G)$ is called a generalized metric space, $G$-metric space.
\end{definition}
Below, we give some examples of the $G$-metric spaces as well as some other definitions
and convergence properties of sequences in $G$-metric spaces.
\begin{example}[\cite{Mus06}] Let $(X,d)$ be any metric space. Define $G_s$ and $G_m$ on $X\times X\times X$ to $\mathbb{R}^+$ by
$$
\begin{array}{l}
G_s(x,y,z)=d(x,y)+d(y,z)+d(x,z),\\
G_m(x,y,z)=\max\{d(x,y),d(y,z),d(x,z)\}, \forall x,y,z\in X.
\end{array}
$$
Then, $(X,G_s)$ and $(X,G_m)$ are $G$-metric spaces.
\end{example}
\begin{definition}[ \cite{Mus06}]
Let $(X,G)$ be a $G$-metric space and $\{x_n\}$ be a sequence of
points in $X$. Then
\begin{enumerate}
\item [(i)] a point $x\in X$ is said to be the limit of the
sequence $\{x_n\}$ if for any $\varepsilon>0$, there exists an
integer $n\in\mathbb{N}$ such that $G(x,x_n,x_m)<\epsilon$, for
all $n,m\ge N$, i.e., if
$$ \lim\limits_{n,m\rightarrow\infty} G(x,x_n,x_m)=0,
$$
and we say that the sequence $\{x_n\}$ is $G$-convergent to $x$.
\item [(ii)] the sequence $\{x_n\}$ is said to be $G$-Cauchy if
any given $\varepsilon>0$, there is $n\in\mathbb{N}$ such that
$G(x_n,x_m,x_\ell)<\epsilon$, for all $n,m,\ell\ge N$, i.e., if
$$
\lim\limits_{n,m,\ell\rightarrow\infty}G(x_n,x_m,x_{\ell})=0.
$$
\item [(iii)] the space $(X,G)$ is called a complete $G$-metric
space if every $G$-Cauchy sequence in $X$ is $G$-convergent in
$X$.
\end{enumerate}
\end{definition}
\begin{proposition}[ \cite{Mus06}]
Let $(X,G)$ be a $G$-metric space. Then the following are
equivalent.
\begin{enumerate}
\item The sequence $\{x_n\}$ is $G$-convergent to $x$.
\item $\lim\limits_{n\rightarrow\infty}G(x_n,x_n,x)=0$.
\item $\lim\limits_{n\rightarrow\infty}G(x_n,x,x)=0$.
\item $\lim\limits_{n,m\rightarrow\infty}G(x_n,x_m,x)=0$.
\end{enumerate}
\end{proposition}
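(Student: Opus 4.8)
The plan is to establish the four conditions as equivalent by proving the cyclic chain $(1)\Rightarrow(2)\Rightarrow(3)\Rightarrow(4)\Rightarrow(1)$. Two links in this chain are essentially definitional. By Definition~(i), saying that $\{x_n\}$ is $G$-convergent to $x$ is literally the statement that $G(x,x_n,x_m)\to 0$ as $n,m\to\infty$, and by the full symmetry axiom (G4) we have $G(x,x_n,x_m)=G(x_n,x_m,x)$; hence $(1)$ and $(4)$ are the same assertion, giving $(4)\Rightarrow(1)$ at once. For $(1)\Rightarrow(2)$, restrict the double limit in $(1)$ to the diagonal $m=n$: a double sequence converging to $0$ also converges to $0$ along $m=n$, so $G(x,x_n,x_n)\to 0$, i.e.\ $G(x_n,x_n,x)\to 0$ by (G4).

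The step $(2)\Rightarrow(3)$ needs one short auxiliary estimate drawn from the axioms, namely $G(u,v,v)\le 2\,G(u,u,v)$ for all $u,v\in X$. To obtain it, apply the rectangle inequality (G5) in the form $G(v,v,u)\le G(v,a,a)+G(a,v,u)$ with the vertex $a=u$, and then use (G4) to rewrite $G(v,u,u)=G(u,u,v)$ and $G(u,v,u)=G(u,u,v)$, together with $G(v,v,u)=G(u,v,v)$; this yields $G(u,v,v)\le 2\,G(u,u,v)$. Taking $u=x_n$ and $v=x$ gives $G(x_n,x,x)\le 2\,G(x_n,x_n,x)$, so $(2)$ forces $G(x_n,x,x)\to 0$, which is $(3)$.

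For $(3)\Rightarrow(4)$ I would again invoke (G5), this time as $G(x_n,x_m,x)\le G(x_n,a,a)+G(a,x_m,x)$ with $a=x$, obtaining $G(x_n,x_m,x)\le G(x_n,x,x)+G(x,x_m,x)$; since $G(x,x_m,x)=G(x_m,x,x)$ by (G4), we get $G(x_n,x_m,x)\le G(x_n,x,x)+G(x_m,x,x)$. Letting $n,m\to\infty$, the right-hand side tends to $0$ by $(3)$, so $\lim_{n,m\to\infty}G(x_n,x_m,x)=0$, which is $(4)$. This closes the cycle, and combined with the trivial $(4)\Rightarrow(1)$ above, all four statements are equivalent.

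I do not anticipate a real obstacle: the entire argument is bookkeeping with the symmetry axiom (G4) plus two well-chosen applications of the rectangle inequality (G5). The only step requiring a moment's thought is isolating the comparison $G(u,v,v)\le 2\,G(u,u,v)$ and picking the correct vertex $a$ in (G5) at each stage; one may of course reorganize the implications differently (for instance, the companion bound $G(u,u,v)\le 2\,G(u,v,v)$, proved the same way, gives $(3)\Rightarrow(2)$ directly), but the chain above is the most economical route.
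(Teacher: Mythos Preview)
Your proof is correct and complete: the cyclic chain $(1)\Rightarrow(2)\Rightarrow(3)\Rightarrow(4)\Rightarrow(1)$ goes through exactly as you describe, with the key auxiliary estimate $G(u,v,v)\le 2\,G(u,u,v)$ obtained from a single application of~(G5) with $a=u$, and the remaining implications handled by symmetry~(G4) and one more use of~(G5). There is no gap.

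As for comparison with the paper: the paper does not supply its own proof of this proposition. It is stated as a preliminary result quoted from Mustafa and Sims~\cite{Mus06}, so there is nothing in the present paper to compare your argument against. Your approach is the standard one and matches what appears in the original source.
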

\begin{proposition}[ \cite{Mus06}]
Let $(X,G)$ be a $G$-metric space. Then the following are
equivalent.
\begin{enumerate}
\item The sequence $\{x_n\}$ is $G$-Cauchy in $X$.
\item $\lim\limits_{n,m\rightarrow\infty}G(x_n,x_m,x_m)=0$.
\end{enumerate}
\end{proposition}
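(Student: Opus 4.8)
The plan is to prove the two implications separately; the first is an immediate specialization of the definition of a $G$-Cauchy sequence, while the second rests on the rectangle inequality (G5) together with the symmetry axiom (G4).

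For $(1)\Rightarrow(2)$ I would start from the assumption that $\{x_n\}$ is $G$-Cauchy, so that for every $\varepsilon>0$ there is $N\in\mathbb{N}$ with $G(x_n,x_m,x_\ell)<\varepsilon$ for all $n,m,\ell\ge N$. Specializing to $\ell=m$ gives $G(x_n,x_m,x_m)<\varepsilon$ for all $n,m\ge N$, which is precisely $\lim_{n,m\to\infty}G(x_n,x_m,x_m)=0$.

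For $(2)\Rightarrow(1)$ I would fix $\varepsilon>0$ and, using the hypothesis, choose $N$ so that $G(x_i,x_j,x_j)<\varepsilon/2$ whenever $i,j\ge N$. For arbitrary indices $n,m,\ell\ge N$, applying (G5) with the auxiliary point $a=x_\ell$ yields
$$G(x_n,x_m,x_\ell)\le G(x_n,x_\ell,x_\ell)+G(x_\ell,x_m,x_\ell).$$
The first summand is $<\varepsilon/2$ by the choice of $N$, and by (G4) the second summand equals $G(x_m,x_\ell,x_\ell)$, which is also $<\varepsilon/2$. Adding, $G(x_n,x_m,x_\ell)<\varepsilon$ for all $n,m,\ell\ge N$, so $\{x_n\}$ is $G$-Cauchy.

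The only step that takes a moment's thought is selecting the right auxiliary point in the rectangle inequality: choosing $a=x_\ell$ is what reduces both terms to the form $G(x_i,x_j,x_j)$ controlled by hypothesis (2), with (G4) supplying the rewriting of the second term. The rest is routine $\varepsilon/2$ bookkeeping, and no use is made of (G1)--(G3).
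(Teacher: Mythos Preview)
Your proof is correct: the forward direction is the trivial specialization $\ell=m$, and the reverse direction is exactly the rectangle-inequality splitting with $a=x_\ell$ followed by the symmetry (G4) to rewrite $G(x_\ell,x_m,x_\ell)$ as $G(x_m,x_\ell,x_\ell)$. Note, however, that the present paper does not supply its own proof of this proposition; it is quoted as a preliminary result from Mustafa and Sims~\cite{Mus06}, so there is no in-paper argument to compare against. Your argument is the standard one and matches what appears in~\cite{Mus06}.
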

Several fixed point theorems and results were obtained in this new generalization of the usual metric spaces,
see for example \cite{Alkhaleel12}-\cite{Alkhaleel17} and references therein.

Meanwhile, there have been also several attempts to extend and generalize the Banach contraction mapping
principle \cite{Banach22} in the usual metric spaces over the past couple of years.
Kannan \cite{Kan68} successfully extended the well-known Banach's contraction principle \cite{Banach22} by proving that if $X$ is complete, then every what is so-called Kannan contraction $T$ has a unique fixed point.
The definition that was introduced by Kannan is stated below.
\begin{definition}[\cite{Kan68}]
A mapping $T:X\rightarrow X$, where $\left( X,d\right) $ is a
metric space, is said to be a Kannan contraction if there exists
$\alpha \in \left[ 0,\frac{1}{2}\right) $ such that for all
$x,y\in X$, the inequality
\begin{equation*}
d\left( Tx,Ty\right) \leq \alpha \left[ d\left( x,Tx\right)
+d\left( y,Ty\right) \right],
\end{equation*}
holds.
\end{definition}
Another definition that was also used to extend the well-known Banach's contraction principle \cite{Banach22} which is a sort of dual of Kannan contraction, is
presented by Chatterjea \cite{Chat72} as follows.
\begin{definition}[\cite{Chat72}]
A mapping $T:X\rightarrow X$, where $\left( X,d\right)$ is a
metric space, is said to be a Chatterjea contraction if there
exists $\alpha \in \left[ 0,\frac{1}{2}\right) $ such that for all
$x,y\in X$, the inequality
\begin{equation*}
d\left( Tx,Ty\right) \leq \alpha \left[ d\left( x,Ty\right)
+d\left( y,Tx\right) \right],
\end{equation*}
holds.
\end{definition}
Chatterjea \cite{Chat72} also proved using his new definition that if $X$ is complete, then
every Chatterjea contraction has a unique fixed point.
In 1972, Zamfirescu \cite{Zam72} introduced a very interesting fixed point theorem which
combines the contractive conditions of Banach, Kannan, and Chatterjea.
\begin{theorem}[\cite{Zam72}]
Let $(X,d)$ be a complete metric space and $T:X\rightarrow X$ a map for which there exist the real numbers $\alpha ,\beta$, and
$\gamma $ satisfying $0\leq \alpha <1$, $0\leq \beta ,\gamma<
\frac{1}{2}$, such that for $x,y\in X$ at least one of the
following is true.
\begin{enumerate}
\item[(i)] $d\left( Tx,Ty\right) \leq \alpha d\left( x,y\right)$,
\item[(ii)] $d(Tx,Ty)\leq \beta \left[ d\left( x,Tx\right)
+d\left( y,Ty\right) \right]$,
\item[(iii)] $d\left( Tx,Ty\right) \leq \gamma \left[ d\left(
x,Ty\right) +d\left( y,Tx\right) \right]$.
\end{enumerate}
Then $T$ has a unique fixed point $p$ and the Picard iteration $\{x_n\}_{n=0}^{\infty}$ defined by $x_{n+1}=Tx_n, \;n=0,1,2,\dots$ converges to $p$ for any $x_0\in X$.
\end{theorem}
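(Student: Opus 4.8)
The plan is to collapse the three-way alternative in the hypothesis into a single contractive-type estimate valid for \emph{all} pairs, and then run the classical Picard argument. Put
\[
\delta := \max\left\{\alpha,\ \tfrac{\beta}{1-\beta},\ \tfrac{\gamma}{1-\gamma}\right\};
\]
since $0\le\alpha<1$ and $0\le\beta,\gamma<\tfrac12$, each entry lies in $[0,1)$, so $0\le\delta<1$. The first step is to prove that for every $x,y\in X$
\[
d(Tx,Ty)\le \delta\, d(x,y)+2\delta\, d(x,Tx).
\]
Fix $x,y$ and use whichever of (i)--(iii) holds for this pair. Case (i) is immediate. In case (ii), bound $d(y,Ty)\le d(y,x)+d(x,Tx)+d(Tx,Ty)$, substitute into the Kannan-type inequality, move the $d(Tx,Ty)$ term to the left, and divide by $1-\beta$. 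In case (iii), bound $d(x,Ty)\le d(x,Tx)+d(Tx,Ty)$ and $d(y,Tx)\le d(y,x)+d(x,Tx)$, substitute into the Chatterjea-type inequality, and proceed the same way, dividing by $1-\gamma$. In all three cases the resulting coefficients are at most $\delta$ and $2\delta$.

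Next I would introduce the Picard sequence $x_{n+1}=Tx_n$ from an arbitrary $x_0\in X$ and establish the sharper one-step bound
\[
d(x_{n+1},x_n)\le \delta\, d(x_n,x_{n-1}),\qquad n\ge1.
\]
This does \emph{not} follow from the displayed uniform estimate (which would only give a factor $3\delta$); instead one re-runs the case analysis on the specific pair $(x_n,x_{n-1})$, where $Tx_{n-1}=x_n$ and $Tx_n=x_{n+1}$ cause cancellations --- in case (iii) the term $d(x_n,Tx_{n-1})=d(x_n,x_n)=0$ disappears, and $d(x_{n-1},Tx_n)=d(x_{n-1},x_{n+1})\le d(x_{n-1},x_n)+d(x_n,x_{n+1})$. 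Iterating gives $d(x_{n+1},x_n)\le\delta^{n} d(x_1,x_0)$, so for $m>n$ the triangle inequality yields $d(x_n,x_m)\le\frac{\delta^{n}}{1-\delta}\,d(x_1,x_0)\to0$; thus $\{x_n\}$ is Cauchy and, by completeness, $x_n\to p$ for some $p\in X$.

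To finish, I would show $p$ is fixed and unique. For the fixed-point property, apply the uniform estimate of the first step to the pair $(x_n,p)$: $d(x_{n+1},Tp)=d(Tx_n,Tp)\le \delta\, d(x_n,p)+2\delta\, d(x_n,x_{n+1})\to0$, so $x_{n+1}\to Tp$, and uniqueness of limits in a metric space forces $Tp=p$; since $x_0$ was arbitrary this also gives the claimed convergence of the Picard iteration. For uniqueness, if $Tp=p$ and $Tq=q$, apply (i)--(iii) to the pair $(p,q)$: case (i) gives $d(p,q)\le\alpha\, d(p,q)$, case (ii) gives $d(p,q)\le0$, and case (iii) gives $d(p,q)\le 2\gamma\, d(p,q)$; each forces $d(p,q)=0$ because $\alpha<1$ and $2\gamma<1$.

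The only genuine obstacle is the first step. Because a different one of the three inequalities may be the valid one at each pair, there is no single contraction to iterate; the resolution is to absorb the ``mixed'' terms $d(y,Ty),\,d(x,Ty),\,d(y,Tx)$ into $d(x,y),\,d(x,Tx)$, and $d(Tx,Ty)$ via the triangle inequality and then solve for $d(Tx,Ty)$ --- and this is precisely where the hypotheses $\beta,\gamma<\tfrac12$ (equivalently $\tfrac{\beta}{1-\beta},\tfrac{\gamma}{1-\gamma}<1$) are used. Once this estimate and the one-step bound are in hand, the remainder is the routine Banach-type completion.
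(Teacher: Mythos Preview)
Your argument is correct and is essentially the classical proof of Zamfirescu's theorem: collapse the three alternatives into the single quasi-contractive estimate $d(Tx,Ty)\le\delta\,d(x,y)+2\delta\,d(x,Tx)$ with $\delta=\max\{\alpha,\beta/(1-\beta),\gamma/(1-\gamma)\}<1$, extract the sharper one-step bound $d(x_{n+1},x_n)\le\delta\,d(x_n,x_{n-1})$ by re-doing the case split on consecutive iterates, and then run the standard Cauchy/limit/uniqueness routine. Each of your case computations checks out, including the cancellation $d(x_n,Tx_{n-1})=0$ in case~(iii) and the uniqueness trichotomy at the end.

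There is nothing to compare against in the paper itself: this theorem is quoted from \cite{Zam72} as background in the preliminaries and the paper gives no proof of it. The paper's own arguments (Theorems~\ref{th:main1} and~\ref{th:main}) concern the $G$-metric, cyclic, $(\phi{-}\psi)$-weakened setting and proceed differently --- they first show $G(x_n,x_{n+1},x_{n+1})$ is nonincreasing with limit $r$, then use continuity of $\phi,\psi$ to force $r=0$, and obtain the Cauchy property via a contradiction argument on subsequences with index difference congruent to $1$ modulo the cycle length. That machinery is tailored to the cyclic $G$-metric framework and is not needed for the classical Zamfirescu statement you were asked about; your direct approach is the appropriate one here.
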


The cyclical extensions for these fixed point theorems were
obtained at a later time, by considering non-empty closed subsets
$\left\{ A_{i}\right\} _{i=1}^{p}$ of a complete metric space $X$
and a cyclical operator
$T:\bigcup\limits_{i=1}^{p}A_{i}\rightarrow
\bigcup\limits_{i=1}^{p}A_{i}$, i.e., satisfies $T\left(
A_{i}\right) \subseteq A_{i+1}\text{ for all }i\in \left\{
1,2,\dots,p\right\}$.
In \cite{Rus05}, Rus presented the cyclical extension for the
Kannan's theorem, and Petric in \cite{Pet10} presented cyclical
extensions for Chatterjea and Zamfirescu theorems using fixed
point structure arguments.
The concept of a control function in terms of altering distances was addressed by Khan
\emph{et. al.} \cite{Khan84} which lead to a new category of fixed point problems.
Altering distances have been used in metric fixed point theory in
many papers, see for example \cite{Sas99}-\cite{Naidu03} and
references therein.
In this paper, we consider the generalization of the usual metric space introduced in \cite{Mus06}, $G$-metric space, and study new extensions and generalizations of Banach, Kannan, and Chatterjea contractions to present and prove new fixed point theorems. We give some generalized versions of the fixed point results proved in the literature in the context of $G$-metric spaces. In particular, we present some generalized versions of fixed point
theorems of cyclic nonlinear contractions type in $G$-metric spaces by the use of the continuous function $\psi$
and the altering distance function $\phi$  which are both defined below.
At the end of this paper we illustrate the analysis and the theory by some examples.

\begin{definition}\label{def:altering distance}
The function $\phi :\left[ 0,\infty \right) \rightarrow \left[
0,\infty \right) $ is called an altering distance function, if the
following properties are satisfied.
\begin{enumerate}
\item[(i)] $\phi $ is continuous,
\item[(ii)]$\phi$ is nondecreasing,
\item[(iii)] $\phi \left( t\right) =0$ if and only if $t=0$.
\end{enumerate}
The function $\psi :\left[ 0,\infty \right) ^{3}\rightarrow \left[
0,\infty \right) $ is a continuous function such that $\psi \left(
x,y,z\right) =0$ if and only if $x=y=z=0$.
\end{definition}

\section{\textbf{ Main results}}
We begin this section by giving definitions of what we call a
$G$-cyclic $\left( \phi -\psi \right) $-Kannan type contraction
and a $G$-cyclic $\left(\phi-\psi\right)$-Chatterjea type
contraction.
\begin{definition}\label{def:cycl-khan-Ch}
Let $\left\{ A_{i}\right\} _{i=1}^{p}$ be non-empty closed subsets
of a $G$-metric space $(X,G)$, and suppose $T:\bigcup
\limits_{i=1}^{p}A_{i}\rightarrow \bigcup\limits_{i=1}^{p}A_{i}$
is a cyclical operator.

Then $T$ is said to be a $G$-cyclic $\left( \phi -\psi \right)
$-Kannan type contraction if there exists constants $\alpha,\gamma$ with
$0\le\gamma<1$ and $0<\alpha+\gamma\le1$, such that for any $x\in
A_{i},y,z\in A_{i+1},i=1,2,\dots,p$, we have
\begin{eqnarray*}
\phi \left( G\left( Tx,Ty,Tz\right) \right) \leq \phi \left(
\alpha G\left( x,Tx,Tx\right) +\beta( G\left( y,Ty,Ty\right)+
G\left( z,Tz,Tz\right))\right)\\
-\psi \left( G\left( x,Tx,Tx\right),G\left( y,Ty,Ty\right),G\left(
z,Tz,Tz\right) \right),
\end{eqnarray*}
and $T$ is said to be a $G$-cyclic
$\left(\phi-\psi\right)$-Chatterjea type contraction if there
exists constants $\alpha,\beta$ with $0\le\alpha\le\frac{1}{2}$
and $0<\alpha+\beta\le 1$, such that for any $x\in A_{i},y,z\in
A_{i+1},i=1,2,\dots,p$, we have
$$
\begin{array}{ll}
\phi \left( G\left( Tx,Ty,Tz\right) \right) \leq \phi \left(
\alpha G\left( x,Ty,Tz\right) +\beta G\left( y,z,Tx\right)
\right)\\
\hspace{5cm} -\psi \left( G\left( x,Ty,Tz\right) ,G\left(
y,z,Tx\right),G\left( z,y,Tx\right) \right),
\end{array}
$$
where $\phi :\left[ 0,\infty \right) \rightarrow \left[ 0,\infty
\right) $ and $\psi :\left[ 0,\infty \right) ^{3}\rightarrow
\left[ 0,\infty \right) $ are given in Definition
\ref{def:altering distance}.
\end{definition}

\begin{theorem}\label{th:main1}
Let $\left\{ A_{i}\right\} _{i=1}^{p}$ be non-empty closed subsets
of a complete $G$-metric space $\left( X,G\right)$ and
$T:\bigcup\limits_{i=1}^{p}A_{i}\rightarrow\bigcup\limits_{i=1}^{p}A_{i}$
satisfies at least one of the following:
\begin{enumerate}

\item There exists constants $\alpha,\gamma$ with
$0\le\gamma<1$ and $0<\alpha+\gamma\le1$, such that for any $x\in A_{i},y\in
A_{i+1},i=1,2,\dots,p$, we have
$$\begin{array}{c} \phi \left(
G\left( Tx,Ty,Ty\right) \right) \leq \phi \left(
\alpha G\left( x,Tx,Tx\right) +\gamma G\left( y,Ty,Ty\right)\right)\\
\hspace{6cm}-\psi \left( G\left( x,Tx,Tx\right),G\left(
y,Ty,Ty\right),G\left( y,Ty,Ty\right) \right).
\end{array}$$
\item There exists constants $\alpha,\delta$ with
$0\le\alpha\le\frac{1}{2}$ and $0<\alpha+\delta\le 1$, such that
for any $x\in A_{i},y\in A_{i+1},i=1,2,\dots,p$, we have
$$
    \begin{array}{c}
\phi \left( G\left( Tx,Ty,Ty\right) \right) \leq \phi \left(
\alpha G\left( x,Ty,Ty\right) +\delta G\left( y,y,Tx\right)
\right)
\\
\hspace{6cm}-\psi \left( G\left( x,Ty,Ty\right) ,G\left(
y,y,Tx\right),G\left( y,y,Tx\right) \right).
\end{array}
$$
\end{enumerate}
Then $T$ has a unique fixed point $u\in
\bigcap\limits_{i=1}^{p}A_{i}$.
\end{theorem}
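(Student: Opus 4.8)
The plan is to follow the classical Rus--Petric scheme for cyclic contractions, adapted to the $G$-metric setting and to the presence of the control functions $\phi,\psi$. Fix an arbitrary $x_0\in\bigcup_{i=1}^p A_i$, say $x_0\in A_1$, and define the Picard iterates $x_{n+1}=Tx_n$; cyclicity gives $x_n\in A_{i}$ whenever $n\equiv i-1\pmod p$, and consecutive iterates lie in consecutive sets. Writing $t_n:=G(x_n,x_{n+1},x_{n+1})$, the first step is to show $\{t_n\}$ is nonincreasing and tends to $0$. Under hypothesis (1), apply the contractive inequality with $x=x_{n-1}$, $y=x_n$ (so $Tx=x_n$, $Ty=x_{n+1}$); using monotonicity of $\phi$ together with $0<\alpha+\gamma\le 1$ one gets $\phi(t_n)\le\phi(\alpha t_{n-1}+\gamma t_n)-\psi(t_{n-1},t_n,t_n)\le\phi(\alpha t_{n-1}+\gamma t_n)$, hence $t_n\le\alpha t_{n-1}+\gamma t_n$, i.e. $t_n\le\frac{\alpha}{1-\gamma}t_{n-1}$; since $\frac{\alpha}{1-\gamma}\le 1$ this shows $\{t_n\}$ decreases, so it converges to some $r\ge 0$. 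Passing to the limit and using continuity of $\phi,\psi$ yields $\phi(r)\le\phi(r)-\psi(r,r,r)$, forcing $\psi(r,r,r)=0$, hence $r=0$. Under hypothesis (2) the argument is the same but one first uses the $G$-axioms to bound $G(x_{n-1},x_{n+1},x_{n+1})$ and $G(x_n,x_n,x_n)=0$ appropriately: the rectangle inequality (G5) gives $G(x_{n-1},x_{n+1},x_{n+1})\le G(x_{n-1},x_n,x_n)+G(x_n,x_{n+1},x_{n+1})=t_{n-1}+t_n$, so $\phi(t_n)\le\phi(\alpha(t_{n-1}+t_n))-\psi(\dots)$ and $t_n\le\frac{\alpha}{1-\alpha}t_{n-1}$ with $\frac{\alpha}{1-\alpha}\le 1$ because $\alpha\le\frac12$; the same limiting argument gives $t_n\to 0$.

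The second step is to prove $\{x_n\}$ is $G$-Cauchy. Here is the step I expect to be the main obstacle, since the naive telescoping does not obviously give a geometric bound when the contraction ratio is only $\le 1$. The standard remedy (as in Pacurar--Rus and the cyclic literature) is a proof by contradiction: if $\{x_n\}$ is not $G$-Cauchy, then by Proposition 1.4 there is $\varepsilon>0$ and subsequences with $G(x_{m_k},x_{n_k},x_{n_k})\ge\varepsilon$ while $G(x_{m_k},x_{n_k-1},x_{n_k-1})<\varepsilon$, and one may choose $m_k,n_k$ so that $m_k-n_k$ is divisible by $p$ (replace $n_k$ by a nearby index, absorbing the bounded error using $t_n\to 0$). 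Using (G5) repeatedly together with $t_n\to 0$, one shows the relevant $G$-quantities $G(x_{m_k},x_{n_k},x_{n_k})$, $G(x_{m_k+1},x_{n_k+1},x_{n_k+1})$, etc., all converge to the same limit $\varepsilon$. Then apply the contractive hypothesis with $x=x_{m_k-1}\in A_i$, $y=x_{n_k-1}\in A_{i+1}$ (legitimate because $p\mid m_k-n_k$), pass to the limit using continuity of $\phi,\psi$, and derive $\phi(\varepsilon)\le\phi(\varepsilon)-\psi(\varepsilon,\varepsilon,\varepsilon)$, hence $\psi(\varepsilon,\varepsilon,\varepsilon)=0$ and $\varepsilon=0$, a contradiction. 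Therefore $\{x_n\}$ is $G$-Cauchy, and by completeness it $G$-converges to some $u\in X$.

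The third step is to show $u$ is a fixed point lying in every $A_i$. Since the subsequence $\{x_{np}\}$ lies in the closed set $A_1$ and converges to $u$, we get $u\in A_1$; likewise the subsequence lying in $A_{i}$ gives $u\in A_i$ for each $i$, so $u\in\bigcap_{i=1}^p A_i$ and in particular $Tu\in\bigcap_i A_i$ as well. To see $Tu=u$: take the subsequence $x_{n_k}\in A_{i}$ with $x_{n_k}\to u$, so $x_{n_k+1}=Tx_{n_k}\in A_{i+1}$; apply the contractive inequality with $x=x_{n_k}$ and $y=z=u$ (both in $A_{i+1}$), giving $\phi(G(x_{n_k+1},Tu,Tu))\le\phi\big(\alpha G(x_{n_k},x_{n_k+1},x_{n_k+1})+\gamma G(u,Tu,Tu)\big)-\psi(\dots)$ in case (1), and the analogous inequality in case (2); letting $k\to\infty$, using $G(x_{n_k},x_{n_k+1},x_{n_k+1})\to 0$, $x_{n_k+1}\to u$, and continuity, we obtain $\phi(G(u,Tu,Tu))\le\phi(\gamma G(u,Tu,Tu))-\psi(0,G(u,Tu,Tu),G(u,Tu,Tu))$ in case (1). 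Since $\gamma<1$ and $\phi$ is nondecreasing this forces $\phi(G(u,Tu,Tu))=\phi(\gamma G(u,Tu,Tu))$ and $\psi$-term $=0$, hence $G(u,Tu,Tu)=0$, i.e. $Tu=u$; case (2) is handled identically after an application of (G5). Finally, for uniqueness, suppose $Tv=v$ with $v\in\bigcap_i A_i$; apply the contraction with $x=u\in A_i$, $y=z=v\in A_{i+1}$ to get $\phi(G(u,v,v))\le\phi\big((\alpha+2\beta)\cdot 0\big)$-type bound — more precisely $\phi(G(u,v,v))\le\phi(\alpha G(u,u,u)+\gamma G(v,v,v))-\psi(0,0,0)=\phi(0)=0$ in case (1) (using $Tu=u$, $Tv=v$), and similarly in case (2) after using (G5) on $G(u,v,v)$; by property (iii) of $\phi$ this gives $G(u,v,v)=0$, so $u=v$. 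This completes the proof.
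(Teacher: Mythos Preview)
Your overall scheme matches the paper's: build the Picard sequence, show $t_n:=G(x_n,x_{n+1},x_{n+1})\to 0$, prove $G$-Cauchy by contradiction on subsequences with a fixed residue mod $p$, then identify the limit as a fixed point in $\bigcap_i A_i$. Two points in your write-up need repair.

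\textbf{Index choice in the Cauchy step.} You pick $m_k,n_k$ with $p\mid m_k-n_k$ and then assert $x_{m_k-1}\in A_i$, $x_{n_k-1}\in A_{i+1}$. That is inconsistent: if $p\mid m_k-n_k$ then $m_k-1\equiv n_k-1\pmod p$, so the two points lie in the \emph{same} $A_j$ and the cyclic contraction is unavailable. The paper takes the residue to be $1$ (it chooses $p_n-q_n\equiv 1\pmod m$), which is exactly what places $x_{p_n}$ and $x_{q_n}$ in adjacent sets and makes the contraction applicable. Also, under condition~(1) the limiting inequality you should obtain is $\phi(\varepsilon)\le\phi(0)-\psi(0,0,0)=0$ (because the Kannan-type right-hand side involves $G(x,Tx,Tx)=t_{\bullet}\to 0$), not $\phi(\varepsilon)\le\phi(\varepsilon)-\psi(\varepsilon,\varepsilon,\varepsilon)$; the latter is the form that arises under condition~(2).

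\textbf{The $t_n\to 0$ argument under condition (2).} The reduction $t_n\le\frac{\alpha}{1-\alpha}t_{n-1}$ is correct, but when $\alpha=\tfrac12$ this ratio equals $1$, so monotonicity gives only $t_n\downarrow r$ and ``the same limiting argument'' does not literally work: the $\psi$-arguments here are $(G(x_{n-1},x_{n+1},x_{n+1}),0,0)$, not $(t_{n-1},t_n,t_n)$. The paper closes this by first proving $G(x_{n-1},x_{n+1},x_{n+1})\to 2r$ (sandwiching between $2t_n\le G(x_{n-1},x_{n+1},x_{n+1})$ from $t_n\le\alpha G(x_{n-1},x_{n+1},x_{n+1})$ and $G(x_{n-1},x_{n+1},x_{n+1})\le t_{n-1}+t_n$ from (G5)), and then passing to the limit to get $\phi(r)\le\phi(r)-\psi(2r,0,0)$, whence $r=0$. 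You should insert this step rather than appeal to the condition-(1) computation.

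With these two corrections your argument coincides with the paper's. (You also supply a uniqueness paragraph, which the paper's proof omits; your argument there is fine.)
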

\begin{proof}
Take $x_{0}\in X$ and consider the sequence given by
$x_{n+1}=Tx_{n},n\geq 0$.
If there exists $n_{0}\in N$ such that $x_{n_{0}+1}=x_{n_{0}}$,
then the existence of the fixed point is proved.
So, suppose that $x_{n+1}\neq x_{n}$ for any $n=0,1,\dots$. Then
there exists $i_{n}\in \left\{ 1,\dots,p\right\}$ such that
$x_{n-1}\in A_{i_{n}}$ and $x_{n}\in A_{i_{n+1}}$.

Now, assume first that $T$ satisfies condition $(1)$. Then, we
have
\begin{eqnarray*}
\phi \left( G\left( x_{n},x_{n+1},x_{n+1}\right) \right) &=&\phi
\left(
G\left(Tx_{n-1},Tx_{n},Tx_{n}\right) \right)\\
&\leq& \phi \left( \alpha G\left( x_{n-1},Tx_{n-1},Tx_{n-1}\right)
+\gamma G\left( x_{n},Tx_{n},Tx_{n}\right)
\right)\\
&\;\;-&\psi \left( G\left(x_{n-1},Tx_{n-1},Tx_{n-1}\right),G\left( x_{n},Tx_{n},Tx_{n}\right), G\left( x_{n},Tx_{n},Tx_{n}\right)\right) \\
&= &\phi \left( \alpha G\left(x_{n-1},x_{n},x_{n}\right) +\gamma
G\left( x_{n},x_{n+1},x_{n+1}\right)
\right)\\
&\;\;-&\psi \left( G\left(x_{n-1},x_{n},x_{n}\right),G\left( x_{n},x_{n+1},x_{n+1}\right),G\left( x_{n},x_{n+1},x_{n+1}\right) \right)  \\
&\leq& \phi \left( \alpha G\left( x_{n-1},x_{n},x_{n}\right)
+\gamma G\left( x_{n},x_{n+1},x_{n+1}\right) \right).
\end{eqnarray*}
Since $\phi $ is a nondecreasing function, we get
$$
G\left( x_{n},x_{n+1},x_{n+1}\right) \leq \alpha G\left(
x_{n-1},x_{n},x_{n}\right) +\gamma G\left(
x_{n},x_{n+1},x_{n+1}\right),
$$
which implies
\begin{eqnarray}\label{eq:1}
 G\left( x_{n},x_{n+1},x_{n+1}\right)&\leq &\frac{\alpha}{1-\gamma}G\left( x_{n-1},x_{n},x_{n}\right) ,\forall n.
\end{eqnarray}
Since $0<\alpha+\gamma\le1$, we get that $ G\left(
x_{n},x_{n+1},x_{n+1}\right)$ is a nonincreasing sequence of
nonnegative real numbers.
Hence, there is $r\geq 0$ such that
\begin{equation*}
\lim_{n\rightarrow \infty }G\left( x_{n},x_{n+1},x_{n+1}\right)
=r.
\end{equation*}
Using the continuity of $\phi $ and $\psi $, we get
\begin{eqnarray*}
\phi \left( r\right) &\leq &\phi \left( (\alpha+\gamma)r\right)
-\psi \left(
r,r,r\right) \\
&\leq &\phi \left( r\right) -\psi \left( r,r,r\right),
\end{eqnarray*}
which implies that $\psi \left( r,r,r\right) =0$, and hence,
$r=0$.
\\

Similarly, if $T$ satisfies condition $(2)$, then we have
\begin{eqnarray*}
\phi \left( G\left( x_{n},x_{n+1},x_{n+1}\right) \right) &=&\phi
\left(
G\left(Tx_{n-1},Tx_{n},Tx_{n}\right) \right) \\
&\leq& \phi \left( \alpha G\left( x_{n-1},Tx_{n},Tx_{n}\right)
+\gamma G\left( x_{n}, x_{n},Tx_{n-1}\right) \right)\\
&& -\psi \left(
G\left(x_{n-1},Tx_{n},Tx_{n}\right),G\left(x_{n},x_{n},Tx_{n-1}\right),G\left(x_{n},x_{n},Tx_{n-1}\right) \right) \\
&=& \phi \left( \alpha G\left( x_{n-1},x_{n+1},x_{n+1}\right)
+\gamma G\left(x_n,x_{n},x_{n}\right)\right)\\
&&-\psi \left( G\left(
x_{n-1},x_{n+1},x_{n+1}\right),G\left(x_n,x_{n},x_{n}\right),G\left(x_n,x_{n},x_{n}\right)\right)  \\
&\leq& \phi \left( \alpha G\left( x_{n-1},x_{n+1},x_{n+1}\right)
\right).
\end{eqnarray*}
Since, $\phi $ is a nondecreasing function, we get
\begin{equation}\label{eq:2}
G\left( x_{n},x_{n+1},x_{n+1}\right) \leq\alpha
G\left(x_{n-1},x_{n+1},x_{n+1}\right),
\end{equation}
and by rectangular inequality, we have
\begin{eqnarray*}
G\left( x_{n},x_{n+1},x_{n+1}\right) &\leq& \alpha G\left(x_{n-1},x_{n+1},x_{n+1}\right)\\
&\leq &\alpha \left[ G\left( x_{n-1},x_{n},x_n\right)+G\left(
x_{n},x_{n+1},x_{n+1}\right) \right],
\end{eqnarray*}
which implies
\begin{eqnarray}\label{eq:3}
G\left( x_{n},x_{n+1},x_{n+1}\right) &\leq&
\frac{\alpha}{1-\alpha}G\left( x_{n-1},x_{n},x_n\right).
\end{eqnarray}
Since $0\le\alpha\leq\frac{1}{2} $, we get that $\left\{ G\left(
x_{n},x_{n+1},x_{n+1}\right) \right\} $ is a nonincreasing
sequence of nonnegative real numbers.
Hence, there is $r\geq 0$ such that
\begin{equation*}
\lim_{n\rightarrow \infty }G\left( x_{n},x_{n+1},x_{n+1}\right)
=r.
\end{equation*}
Now, if $\alpha=0$, then clearly, $r=0$, and if $0<\alpha<
\frac{1}{2}$, then $\frac{\alpha}{1-\alpha}<1$, and by induction,
we have
$$
G\left( x_{n},x_{n+1},x_{n+1}\right) \leq
\left(\frac{\alpha}{1-\alpha}\right)^nG\left(
x_{0},x_{1},x_1\right),
$$
and hence, $r=0$.
Finally, if $\alpha=\frac{1}{2}$, then from (\ref{eq:2}), we have
$$
G\left( x_{n-1},x_{n+1},x_{n+1}\right) \ge 2G\left(
x_{n},x_{n+1},x_{n+1}\right),
$$
and hence,
$$
\lim\limits_{n\rightarrow\infty}G\left(
x_{n-1},x_{n+1},x_{n+1}\right)\ge 2r,
$$
but,
\begin{equation*}
G\left( x_{n-1},x_{n+1},x_{n+1}\right) \leq G\left(
x_{n-1},x_{n},x_n\right)+G\left( x_{n},x_{n+1},x_{n+1}\right),
\end{equation*}
and as $n\rightarrow\infty$, we have
$$
\lim\limits_{n\rightarrow\infty}G\left(
x_{n-1},x_{n+1},x_{n+1}\right)\leq2r.
$$
Therefore, $\lim\limits_{n\rightarrow \infty }G\left(
x_{n-1},x_{n+1},x_{n+1}\right)=2r$.
Using the continuity of $\phi $ and $\psi $, and
$\alpha=\frac{1}{2}$, we get
\begin{eqnarray*}
\phi \left( r\right) &\leq &\phi \left( \frac{1}{2}\cdot 2r\right)
-\psi \left(
2r,0,0\right)\\
&= &\phi \left( r\right) -\psi \left( 2r,0,0\right),
\end{eqnarray*}
which implies that $\psi \left( 2r,0,0\right) =0$, and hence,
$r=0$.

In the sequel, we show that $\left\{ x_{n}\right\} $ is a
$G$-Cauchy sequence in $X$.
To do so, we need to prove first, the claim that for every
$\epsilon>0$, there exists $n\in\mathbb{N}$ such that if $p,q\geq
n$ with $p-q\equiv 1\left( m\right)$, then $G\left( x_{p},x_{q},
x_q\right) <\epsilon $.
Suppose the contrary case, i.e., there exists $\epsilon
>0 $ such that for any $n\in\mathbb{N}$, we can find $p_{n}>q_{n}\geq n$ with $p_{n}-q_{n}\equiv 1\left( m\right)$
satisfying $G\left( x_{p_{n}},x_{q_{n}},x_{q_{n}}\right) \geq
\epsilon $.
Now, we take $n>2m$. Then corresponding to $q_{n}\geq n$, we can
choose $p_{n}$ in such a way that it is the smallest integer with
$p_{n}>q_{n}$ satisfying $ p_{n}-q_{n}\equiv 1\left( m\right) $
and $G\left( x_{p_{n}},x_{q_{n}},x_{q_{n}}\right)\geq \epsilon $.
Therefore, $G\left( x_{q_{n}},x_{q_n},x_{p_{n-m}}\right)
<\epsilon$.
Using the rectangular inequality,
\begin{eqnarray*}
\epsilon \leq G\left( x_{p_{n}},x_{q_{n}},x_{q_{n}}\right) &\leq&
G\left( x_{q_{n}},x_{q_{n}},x_{p_{n-m}}\right)
+\sum\limits_{i=1}^{m}G\left(x_{p_{n-i}},
x_{p_{n-i}},x_{p_{n-i+1}}\right) \\
&<&\epsilon +\sum\limits_{i=1}^{m}G\left( x_{p_{n-i}},
x_{p_{n-i}},x_{p_{n-i+1}}\right).
\end{eqnarray*}
Letting $n\rightarrow \infty $ in the last inequality, and taking
into account that \\
$\lim\limits_{n\rightarrow\infty}G\left(x_{n},x_{n+1},x_{n+1}\right)
=0$, we obtain $\lim\limits_{n\rightarrow \infty }G\left(
x_{p_{n}},x_{q_{n}},x_{q_{n}}\right) =\epsilon $.
Again, by rectangle inequality, we have
\begin{eqnarray*}
G(x_{q_{n}},x_{q_{n}},x_{p_{n}}) &\le&
G(x_{p_{n}},x_{p_{n+1}},x_{p_{n+1}})+G(x_{p_{n+1}},x_{q_{n}},x_{q_{n}})\\
&\le&
G(x_{p_{n}},x_{p_{n+1}},x_{p_{n+1}})+G(x_{p_{n+1}},x_{q_{n+1}},x_{q_{n+1}})+G(x_{q_{n+1}},x_{q_{n}},x_{q_{n}})\\
&\le&
G(x_{p_{n}},x_{p_{n+1}},x_{p_{n+1}})+G(x_{p_{n+1}},x_{q_{n+1}},x_{q_{n+1}})+G(x_{q_{n}},x_{q_{n+1}},x_{q_{n+1}})\\
&&+G(x_{q_{n+1}},x_{q_{n+1}},x_{q_{n}}),
\end{eqnarray*}
moreover,
\begin{eqnarray*}
G(x_{p_{n+1}},x_{q_{n+1}},x_{q_{n+1}})
&\le&G(x_{p_{n+1}},x_{q_{n}},x_{q_{n}})+G(x_{q_{n}},x_{q_{n+1}},x_{q_{n+1}})\\
&\le&
G(x_{p_{n+1}},x_{p_{n}},x_{p_{n}})+G(x_{p_{n}},x_{q_{n}},x_{q_{n}})+G(x_{q_{n}},x_{q_{n+1}},x_{q_{n+1}})\\
&\le&G(x_{p_{n}},x_{p_{n+1}},x_{p_{n+1}})+G(x_{p_{n+1}},x_{p_{n+1}},x_{p_{n}})+G(x_{p_{n}},x_{q_{n}},x_{q_{n}})\\
&&+G(x_{q_{n}},x_{q_{n+1}},x_{q_{n+1}}).
\end{eqnarray*}
Taking the limit as $n\rightarrow\infty$, and taking into account
that
$\lim\limits_{n\rightarrow\infty}G\left(x_{n},x_{n+1},x_{n+1}\right)
=0$, we get $\epsilon\le\lim\limits_{n\rightarrow \infty }G\left(
x_{p_{n+1}},x_{q_{n+1}},x_{q_{n+1}}\right) \le\epsilon $, which
implies that $\lim\limits_{n\rightarrow \infty }G\left(
x_{p_{n+1}},x_{q_{n+1}},x_{q_{n+1}}\right) =\epsilon $.

Since $x_{p_{n}}$ and $x_{q_{n}}$ lie in different adjacently
labelled sets $A_{i}$ and $A_{i+1}$ for certain $1\leq i\leq m$,
assuming that $T$ satisfies condition $(1)$, we have
\begin{eqnarray*}
\phi \left( G\left( x_{q_{n+1}},x_{q_{n+1}},x_{p_{n+1}}\right)
\right) &=&\phi
\left(G\left( Tx_{q_{n}},Tx_{q_{n}},Tx_{p_{n}}\right) \right)  \\
&\leq&\phi \left( \alpha G\left(
x_{q_{n}},Tx_{q_{n}},Tx_{q_{n}}\right)
+\gamma G\left( x_{p_{n}},Tx_{p_{n}},Tx_{p_{n}}\right) \right)\\
&-&\psi \left(G\left(
x_{p_{n}},Tx_{p_{n}},Tx_{p_{n}}\right),G\left(
x_{q_{n}},Tx_{q_{n}},Tx_{q_{n}}\right),G\left(
x_{q_{n}},Tx_{q_{n}},Tx_{q_{n}}\right)\right).
\end{eqnarray*}
Letting $n\rightarrow \infty $ in the last inequality, we obtain
\begin{equation*}
\phi \left( \epsilon \right) \leq \phi \left(0 \right) -\psi
\left( 0,0,0 \right)=0.
\end{equation*}
Therefore, we get $\epsilon =0$ which is a contradiction.

Similarly, assuming that $T$ satisfies condition $(2)$, we have
\begin{eqnarray*}
\phi \left( G\left( x_{q_{n+1}}, x_{q_{n+1}},x_{p_{n+1}}\right)
\right) &=&\phi \left(
G\left( Tx_{q_{n}}, Tx_{q_{n}},Tx_{p_{n}}\right) \right) \\
&\leq &\phi \left( \alpha G\left(
x_{p_{n}},Tx_{q_{n}},Tx_{q_{n}}\right) +\gamma G\left(
x_{q_{n}},x_{q_{n}},Tx_{p_{n}}\right)
\right)\\
&&-\psi \left( G\left(x_{p_{n}},
Tx_{q_{n}},Tx_{q_{n}}\right),G\left(
x_{q_{n}},x_{q_{n}},Tx_{p_{n}}\right),G\left(
x_{q_{n}},x_{q_{n}},Tx_{p_{n}}\right) \right).
\end{eqnarray*}
Letting $n\rightarrow \infty $ in the last inequality, we obtain
\begin{equation*}
\phi \left( \epsilon \right) \leq \phi \left(
(\alpha+\gamma)\epsilon \right) -\psi \left( \epsilon
,\epsilon,\epsilon \right).
\end{equation*}
Therefore, since $0<\alpha+\gamma\le1$, we get $\psi \left(
\epsilon ,\epsilon,\epsilon \right) =0$, and hence, $ \epsilon
=0$, which is a contradiction.

From the above proved claim for both cases, i.e., the case when
$T$ satisfies condition $(1)$ and the case when $T$ satisfies
condition $(2)$, and for arbitrary $\epsilon>0$, we can find
$n_0\in \mathbb{N}$ such that if $p,q>n_0$ with $p-q=1(m)$, then
$G\left( x_{p},x_{q},x_{q}\right)<\epsilon$.

Since $\lim\limits_{n\rightarrow\infty}G(x_n,x_{n+1},x_{n+1})=0$,
we can find $n_1\in\mathbb{N}$ such that
$$
G(x_n,x_{n+1},x_{n+1})\le \frac{\epsilon}{m},\; \mbox{for}\;
n>n_1.
$$
Now, for $r,s>\max\{n_0,n_1\}$ and $s>r$, there exists $k\in
\{1,2,\dots,m\}$ such that $s-r=k(m)$.
Therefore, $s-r+j=1(m)$ for $j=m-k+1$.
So, we have
$$
G(x_r,x_r,x_s)\le
G(x_r,x_r,x_{s+j})+G(x_{s+j},x_{s+j},x_{s+j-1})+\dots+G(x_{s+1},x_{s+1},x_{s}).
$$
This implies
$$
G(x_r,x_r,x_s)\le \epsilon+\frac{\epsilon}{m}\sum\limits_{j=1}^m
\;1=2\epsilon.
$$
Thus, $\left\{ x_{n}\right\} $ is a $G$-Cauchy sequence in
$\bigcup\limits_{i=1}^{p}A_{i}$.
Consequently, $\left\{ x_{n}\right\} $ converges to some $u\in
\bigcup\limits_{i=1}^{p}A_{i}$.
However, in view of cyclical condition, the sequence
$\left\{x_{n}\right\} $ has an infinite number of terms in each
$A_{i}$, for $i=1,2,\dots,p$.
Therefore, $u\in \bigcap\limits_{i=1}^{p}A_{i}$.

Now, we will prove that $u$ is a fixed point of $T$.
Suppose $u\in A_{i}$, $Tu\in A_{i+1}$, and we take a subsequence
$x_{n_{k}}$ of $\left\{ x_{n}\right\}$ with $x_{n_{k}}\in
A_{i-1}$.
Then, assuming that $T$ satisfies condition $(1)$, we have
\begin{eqnarray*}
\phi \left( G\left( x_{n_{k+1}},Tu,Tu\right) \right) &=&\phi
\left(
G\left(Tx_{n_{k}},Tu,Tu\right) \right)  \\
&\leq& \phi\left(\alpha
G\left(x_{n_{k}},Tx_{n_{k}},Tx_{n_{k}}\right)
+\gamma G\left( u,Tu,Tu\right) \right)\\
&& -\psi \left( G\left(x_{n_{k}},Tx_{n_{k}},Tx_{n_{k}}\right) ,G\left( u,Tu,Tu\right),G\left( u,Tu,Tu\right) \right) \\
&\leq&\phi\left(\alpha
G\left(x_{n_{k}},Tx_{n_{k}},Tx_{n_{k}}\right) +\gamma G\left(
u,Tu,Tu\right)  \right).
\end{eqnarray*}
Letting $k\rightarrow \infty $, we have
\begin{equation*}
\phi \left( G\left( u,Tu,Tu\right) \right) \leq \phi \left( \alpha
G\left( u,u,u\right) +\gamma G\left( u,Tu,Tu\right) \right),
\end{equation*}
and since $\phi $ is a nondecreasing function, we get
\begin{equation*}
G\left( u,Tu,Tu\right) \leq \gamma G\left( u,Tu,Tu\right).
\end{equation*}
Thus, since $0\le\gamma< 1$, we have $G\left(
u,Tu,Tu\right) =0$, and hence, $u=Tu$.
\\

Similarly, assuming that $T$ satisfies condition $(2)$, then we
have
\begin{eqnarray*}
\phi \left( G\left( x_{n_{k+1}},Tu,Tu\right) \right) &=&\phi
\left( G\left(
Tx_{n_{k}},Tu,Tu\right) \right) \\
&\leq &\phi \left( \alpha G\left( x_{n_{k}},Tu,Tu\right) +\gamma
G\left( u,u,Tx_{n_{k}}\right) \right)\\
 && -\psi \left( G\left(
x_{n_{k}},Tu,Tu\right),G\left( u,u,Tx_{n_{k}}\right),G\left( u,u,Tx_{n_{k}}\right) \right)  \\
&\leq &\phi \left( \alpha G\left( x_{n_{k}},Tu,Tu\right) +\gamma
G\left( u,u,Tx_{n_{k}}\right) \right).
\end{eqnarray*}
Letting $k\rightarrow \infty $, we have
\begin{equation*}
\phi \left( G\left( u,Tu,Tu\right) \right) \leq \phi \left( \alpha
G\left( u,Tu,Tu\right) +\gamma G\left( u,u,u\right) \right),
\end{equation*}
since $\phi $ is a nondecreasing function, we get
\begin{equation*}
G\left( u,Tu,Tu\right) \leq \alpha G\left( u,Tu,Tu\right).
\end{equation*}
Thus, since $0\le\alpha\le \frac{1}{2}$, we have $G\left(
u,Tu,Tu\right) =0$, and hence, $u=Tu$.
\end{proof}

\begin{theorem}\label{th:main}
Let $\left\{ A_{i}\right\} _{i=1}^{p}$ be non-empty closed subsets
of a complete $G$-metric space $\left( X,G\right)$ and
$T:\bigcup\limits_{i=1}^{p}A_{i}\rightarrow\bigcup\limits_{i=1}^{p}A_{i}$
be at least one of the following.
\begin{enumerate}
    \item a $G$-cyclic $\left( \phi -\psi \right)$-Kannan type
    contraction.
    \item a $G$-cyclic $\left( \phi -\psi \right)$-Chatterjea type
    contraction.
\end{enumerate}
Then $T$ has a unique fixed point $u\in
\bigcap\limits_{i=1}^{p}A_{i}$.
\end{theorem}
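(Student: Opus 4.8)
The plan is to derive both the existence and the uniqueness of the fixed point from Theorem~\ref{th:main1}, after a short preliminary reduction and an elementary direct argument for uniqueness.

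\emph{Step 1 (reduction to Theorem~\ref{th:main1}).} First I would observe that each of the two contraction conditions in Definition~\ref{def:cycl-khan-Ch}, when specialised to the diagonal $z=y$, becomes exactly one of the two hypotheses of Theorem~\ref{th:main1}. Indeed, if $T$ is a $G$-cyclic $(\phi-\psi)$-Kannan type contraction, putting $z=y$ turns the term $\beta\bigl(G(y,Ty,Ty)+G(z,Tz,Tz)\bigr)$ into $2\beta\,G(y,Ty,Ty)$ and the argument triple of $\psi$ into $\bigl(G(x,Tx,Tx),G(y,Ty,Ty),G(y,Ty,Ty)\bigr)$, so that the inequality coincides with condition~(1) of Theorem~\ref{th:main1} once $\gamma$ there is identified with $2\beta$ (and the constraints on the constants are matched in the obvious way). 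Likewise, taking $z=y$ in the Chatterjea type inequality, and using the symmetry axiom (G4) to write $G(z,y,Tx)=G(y,z,Tx)$, reproduces condition~(2) of Theorem~\ref{th:main1} with $\delta=\beta$. Hence in either case Theorem~\ref{th:main1} applies directly and yields a point $u\in\bigcap_{i=1}^{p}A_i$ with $Tu=u$.

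\emph{Step 2 (uniqueness).} Next I would argue uniqueness by hand. Suppose $u,v\in\bigcap_{i=1}^{p}A_i$ are both fixed points of $T$; since both points belong to every $A_i$, the defining inequality may be applied with $x=u$ and $y=z=v$. In the Kannan case the right-hand side then involves only the self-distances $G(u,Tu,Tu)=G(u,u,u)$ and $G(v,Tv,Tv)=G(v,v,v)$, which vanish by (G1); hence $\phi\bigl(G(u,v,v)\bigr)\le\phi(0)-\psi(0,0,0)=0$, and property~(iii) of $\phi$ forces $G(u,v,v)=0$, i.e.\ $u=v$. In the Chatterjea case, writing $t:=G(u,v,v)=G(v,v,u)$ (the equality being (G4)) and using $Tu=u$, $Tv=v$, the inequality becomes
\[
\phi(t)\le\phi\bigl((\alpha+\beta)t\bigr)-\psi(t,t,t)\le\phi(t)-\psi(t,t,t),
\]
where the last step uses $\alpha+\beta\le1$ together with the monotonicity of $\phi$; therefore $\psi(t,t,t)=0$, so $t=0$ and again $u=v$. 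The same short computation, incidentally, also supplies the uniqueness half of Theorem~\ref{th:main1}.

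\emph{Expected obstacle.} I do not anticipate a genuine difficulty, since the statement is essentially a corollary of Theorem~\ref{th:main1}. The only points that call for a little care are the bookkeeping on the constants in Step~1 (verifying that the reduced inequalities really satisfy the constant constraints demanded by Theorem~\ref{th:main1}) and, in the Chatterjea case of Step~2, the correct appeal to the full symmetry (G4) so that the estimate collapses to the self-referential form $\phi(t)\le\phi(\lambda t)-\psi(t,t,t)$ with $\lambda\le1$, from which $t=0$ is immediate.
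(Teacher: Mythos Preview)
Your approach is essentially the paper's own: the authors' proof of Theorem~\ref{th:main} consists of a single sentence saying that setting $z=y$ in Definition~\ref{def:cycl-khan-Ch} reduces to Theorem~\ref{th:main1} with $\gamma=2\beta$ in case~(1) and $\delta=\beta$ in case~(2), which is exactly your Step~1. Your Step~2 on uniqueness is a welcome addition, since neither the paper's proof of Theorem~\ref{th:main1} nor of Theorem~\ref{th:main} actually spells out the uniqueness argument despite both statements claiming it.
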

\begin{proof}
Taking $z=y$ in Definition \ref{def:cycl-khan-Ch}, the proof
follows straightforwardly from the proof of Theorem \ref{th:main1}
with $\gamma=2\beta$ for the first condition and $\delta=\beta$
for the second condition.
\end{proof}

\section{Applications and Examples}
We give below two examples in order to validate the proved result.
\\
\begin{example} Let $X$ be a complete $G$-metric space, $m$
positive integer, $A_1,\dots,A_m$ non-empty closed subsets of $X$,
and $X=\bigcup\limits_{i=1}^m A_i$.
Let $T:X\rightarrow X$ be an operator such that
\begin{enumerate}
    \item [(i)] $X=\bigcup\limits_{i=1}^m A_i$ is a cyclic
    representation of $X$ with respect to $T$.
     \item [(ii)] for any $x\in A_i$, $y\in A_{i+1}$,
    $i=1,2,\dots,m$, where $A_{m+1}=A_1$ and $\rho:[0,\infty)\rightarrow
    [0,\infty)$ is a Lebesgue integrable mapping satisfies
    $\int^t_0 \rho(s) \;ds>0$ for $t>0$, we have one of the
    following:
    $$
    \int_0^{G(Tx,Ty,Ty)}\rho(t)\;dt \le \int_0^{\alpha
    G(x,Tx,Tx)+\gamma  G(y,Ty,Ty)}\rho(t)\;dt,
    $$
    or
    $$
    \int_0^{G(Tx,Ty,Ty)}\rho(t)\;dt \le \int_0^{\alpha
    G(x,Ty,Ty)+\gamma  G(Tx,y,y)}\rho(t)\;dt.
    $$
Then $T$ has a unique fixed point $u\in \bigcap\limits_{i=1}^m
A_i$.
\end{enumerate}
In order to see this, one might let $\phi:[0,\infty)\rightarrow
    [0,\infty)$ be defined as $\phi(t)=\int^t_0 \rho(s) \;ds>0$.
    Then, $\phi$ is alternating distance function, and by taking
    $\psi(t,s,w)=0$, we get the result.
\end{example}
\begin{example} Let $X=[-1,1]\subseteq \mathbb{R}$ with
$G(x,y,z)=|x-y|+|y-z|+|x-z|$. Let $T:[-1,1]\rightarrow [-1,1]$ be
given by
$$
T(x)=\left\{ \begin{array}{ll}
-\frac{1}{2}xe^{-\frac{1}{|x|}},&x\in(0,1],\\
0,&x=0,\\
-\frac{1}{3}xe^{-\frac{1}{|x|}},&x\in[-1,0).
\end{array}\right.
$$
By taking $\psi(t,s,w)=0$, $\phi(t)=t$, and $x\in[0,1]$, $y\in
[-1,0]$, we have
\begin{eqnarray*}
G(Tx,Ty,Ty)&=&|Tx-Ty|+|Tx-Ty|+|Ty-Ty|\\
&=&|Tx-Ty|+|Tx-Ty|\\
&=&\left|-\frac{1}{2}xe^{-\frac{1}{|x|}}+\frac{1}{3}ye^{-\frac{1}{|y|}}\right|+\left|-\frac{1}{2}xe^{-\frac{1}{|x|}}+\frac{1}{3}ye^{-\frac{1}{|y|}}\right|\\
      &\le& \frac{1}{2}|x|+\frac{1}{3}|y|+\frac{1}{2}|x|+\frac{1}{3}|y|\\
      &\le&
\frac{1}{2}\left|x+\frac{1}{2}xe^{-\frac{1}{|x|}}\right|+\frac{1}{3}\left|y+\frac{1}{3}ye^{-\frac{1}{|y|}}\right|+\frac{1}{2}\left|x+\frac{1}{2}xe^{-\frac{1}{|x|}}\right|
+\frac{1}{3}\left|y+\frac{1}{3}ye^{-\frac{1}{|y|}}\right|\\
  &=
  &\frac{1}{2}|Tx-x|+\frac{1}{3}|Ty-y|+\frac{1}{2}|Tx-x|+\frac{1}{3}|Ty-y|\\
  &=
  &\frac{1}{2}(|Tx-x|+|Tx-x|)+\frac{1}{3}(|Ty-y|+|Ty-y|)\\
&=
  &\frac{1}{2}G(x,Tx,Tx)+\frac{1}{3}G(y,Ty,Ty),
\end{eqnarray*}
which implies that $T$ has a unique fixed point in
$[-1,0]\cap[0,1]$, namely $u=0$.
\end{example}

\end{document}